  \renewcommand{\Pr}{\mbox{\rm Pr}}
  \newcommand{\Exp}{{\mathbb{E}}}
  \newcommand{\C}{\mathbb{C}} 
  \newcommand{\D}{\mathbb{D}} 
  \newcommand{\N}{\mathbb{N}} 
  \newcommand{\Z}{\mathbb{Z}} 
  \newcommand{\F}{\mathbb{F}} 
  \newcommand{\pmset}[1]{\{-1,1\}^{#1}} 
  \newcommand{\bset}[1]{\{0,1\}^{#1}} 
  \DeclareMathOperator{\conv}{Conv}
  \DeclareMathOperator{\ord}{ord}
  \DeclareMathOperator{\ent}{\mathcal N}
  \newcommand{\st}{\,\mid\,} 
  \newcommand{\eps}{\varepsilon}
  \newcommand{\ceil}[1]{\lceil{#1}\rceil}
  \newcommand{\bfi}{\mathbf{i}}
  \newcommand{\beq}{\begin{equation}}
  \newcommand{\eeq}{\end{equation}}
  \newcommand{\beqn}{\begin{equation*}}
  \newcommand{\eeqn}{\end{equation*}}
  \newcommand{\beqr}{\begin{eqnarray}}
  \newcommand{\eeqr}{\end{eqnarray}}
  \newcommand{\beqrn}{\begin{eqnarray*}}
  \newcommand{\eeqrn}{\end{eqnarray*}}
  \newcommand{\bmline}{\begin{multline}}
  \newcommand{\emline}{\end{multline}}
  \newcommand{\bmlinen}{\begin{multline*}}
  \newcommand{\emlinen}{\end{multline*}}
  \theoremstyle{plain}
  \newtheorem{theorem}{Theorem}[section]
  \newtheorem{lemma}[theorem]{Lemma}
  \newtheorem{proposition}[theorem]{Proposition}
  \theoremstyle{definition}
  \theoremstyle{remark}
  \newtheorem{remark}[theorem]{Remark}
  \renewenvironment{proof}[1][]{
    	\begin{trivlist}
     	\item[\hspace{\labelsep}{\em\noindent Proof#1:\/}]}
     	{{\hfill$\Box$}
    	\end{trivlist}
  }
  \newtheorem*{rep@theorem}{\rep@title}
  \newcommand{\newreptheorem}[2]{%
  \newenvironment{rep#1}[1]{%
  \def\rep@title{#2 \ref{##1}}%
  \begin{rep@theorem}}%
  {\end{rep@theorem}}}
\begin{document}
\title[]{High-entropy dual functions over finite fields and locally decodable codes}
\author{Jop Bri\"{e}t \and Farrokh Labib}
\address{CWI, Science Park 123, 1098 XG Amsterdam, The Netherlands}
\email{$\{$j.briet, labib$\}$@cwi.nl}
\thanks{The authors were supported by the  Gravitation grant  NETWORKS-024.002.003 from the Dutch Research Council (NWO)}

\maketitle


\begin{abstract}
We show that for infinitely many primes~$p$, there exist dual functions of order~$k$ over~$\F_p^n$ that cannot be approximated in $L_\infty$-distance by polynomial phase functions of degree~$k-1$.
This answers in the negative a natural finite-field analog of a problem of Frantzikinakis on $L_\infty$-approximations of dual functions over~$\N$ (a.k.a.\ multiple correlation sequences) by nilsequences.
\end{abstract}

\section{Introduction}

For $k\geq 2$, integer vector $\bfi  = (i_1,\dots,i_k)\in \Z_{\geq 0}^k$ and finite abelian group~$G$, the associated set of \emph{order-$k$ dual functions} is given by
\beqn\label{def:dual_function}
\Delta_{\bfi}
=
\big\{\phi:y\mapsto \Exp_{x\in G}f_1(x + i_1y)\cdots f_k(x+i_ky) \st
f_i:G\to \D\big\},
\eeqn
where~$\D$ denotes the complex unit disc.
For example, if $A\subseteq G$ is a subset, ${\bfi} = (0,1,2)$ and $f_i = 1_A$ for each~$i\in [3]$, then~$\phi(y)$ is the fraction of three-term arithmetic progressions in~$A$ with common difference~$y$.

For applications in additive combinatorics and higher-order Fourier analysis, it is desirable to understand to what extent dual functions can be approximated by simpler functions.
If~$k = 2$, it follows from the Fourier inversion formula that one has the simple decomposition in terms of the characters:
\beq\label{eq:linear}
\phi(y) 
=\sum_{\chi\in \widehat{G}} \alpha_\chi \chi\big((i_2 - i_1)y\big),
\eeq
where $\|\alpha\|_{\ell_1} \leq 1$.
Similar decompositions exist for higher-order dual functions thanks to deep ``inverse theorems'' for the Gowers uniformity norms.
Inverse theorems roughly show that if~$f$ has large~$U^k$-norm, then~$f$ correlates with a function $\psi:G\to \D$ akin to a polynomial of degree at most~$k-1$. 
Here the ``linear''~$\psi$ are precisely the characters.
What exactly the ``higher-order characters'' are depends on the group~$G$. 
For finite vector spaces~$\F_p^n$ with $p \geq k$, they are the \emph{polynomial phase functions} 
\beqn
\psi(x) =  e^{2\pi i P(x)/p},
\eeqn 
where~$P\in \F_p[x_1,\dots,x_n]$ is a polynomial  of degree at most~$k-1$~\cite{Tao:2010}.
When $p < k$, one has to consider the larger class of non-classical polynomials~\cite{Tao:2012b}.
For the cyclic group~$\Z_N$, they are the $(k-1)$-step nilsequences (of bounded complexity)~\cite{Green:2012}.
Combined with the Hahn-Banach theorem, these inverse theorems imply that the decomposition~\eqref{eq:linear} generalizes for larger~$k$ in terms of higher-order characters of degree at most~$k-1$ up-to small $L_1$-error~\cite{Gowers:2010}.
More precisely, in the finite-field setting, this amounts to the following:

\begin{proposition}\label{prop:L1approx}
Let~$p \geq k+1$ be a prime and let $G = \F_p^n$.
Then, for any $\eps > 0$ and ~$\bfi \in \Z_{\geq 0}^k$, there is an $M = M(\eps,k,p)>0$ such that any dual function $\phi \in \Delta_{\bfi}$ can be decomposed as
\beq\label{eq:phi_approx}
\phi = \sum_{i = 1}^r \alpha_i \psi_i + \tau,
\eeq
where
$\alpha_1,\dots,\alpha_r\in \C$ satisfy $|\alpha_1| + \cdots + |\alpha_r| \leq M$,
 $\psi_1,\dots,\psi_r$ are polynomial phases of degree at most~$k-1$ and
$\|\tau\|_{L_1} \leq \eps$.
\end{proposition}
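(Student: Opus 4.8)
The plan is to deduce this from the $U^k$-inverse theorem over $\F_p^n$ together with a Hahn--Banach separation argument, following the template of~\cite{Gowers:2010}. The first ingredient is a \emph{generalized von Neumann estimate}: for every $\phi\in\Delta_{\bfi}$ and every $g\colon G\to\C$ with $\|g\|_\infty\le 1$,
\beqn
\big|\langle\phi,g\rangle\big| \,:=\, \Big|\Exp_{y\in G}\phi(y)\overline{g(y)}\Big| \,\le\, \|g\|_{U^k}.
\eeqn
To see this, after a translation we may assume $i_1=0$, whence $\langle\phi,g\rangle=\Exp_{x,y}\overline{g(y)}\prod_{j=1}^{k}f_j(x+i_jy)$ is an average over the linear system consisting of the $k$ forms $x+i_jy$ and the form $y$. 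After discarding repetitions we may assume the $i_j$ are pairwise distinct modulo $p$; then any two of the forms $x+i_jy$ are linearly independent and hence span $y$, so the Cauchy--Schwarz complexity of this system in the slot of $g$ is exactly $k-1$, and $k$ applications of the Cauchy--Schwarz inequality (each harmless because $|f_j|\le 1$) give the claimed bound. This is the step that needs $p$ not too small, and $p\ge k+1$ is more than enough; a constant factor depending only on $k$ in this bound would also be harmless below.

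For the second ingredient, recall the inverse theorem for the $U^k$-norm on $\F_p^n$ with $p\ge k+1$~\cite{Tao:2010}: for every $\delta>0$ there is a $c=c(\delta,k,p)>0$, \emph{independent of $n$}, such that any $g\colon G\to\D$ with $\|g\|_{U^k}\ge\delta$ satisfies $|\langle g,\psi\rangle|\ge c$ for some polynomial phase $\psi$ of degree at most $k-1$. Given $\eps>0$, apply this with $\delta=\eps$ to obtain $c=c(\eps,k,p)$, and set $M:=1/c$. Combining the inverse theorem with the first ingredient and the trivial bound $|\langle\phi,g\rangle|\le\|\phi\|_\infty\le 1$, we get for every $g$ with $\|g\|_\infty\le 1$
\beqn
\big|\langle\phi,g\rangle\big| \,\le\, M\cdot\max_{\psi}\big|\langle\psi,g\rangle\big| \,+\, \eps,
\eeqn
the maximum being over polynomial phases $\psi$ of degree at most $k-1$: if that maximum is at least $c$ the right-hand side is at least $1$; otherwise $\|g\|_{U^k}<\eps$ by the inverse theorem, so the left-hand side is below $\eps$.

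Finally, I would invoke Hahn--Banach in the finite-dimensional real inner-product space underlying $\C^G$ with the pairing $(f,g)\mapsto\operatorname{Re}\Exp_x f(x)\overline{g(x)}$. Since $\F_p^n$ carries only finitely many polynomial phases of degree at most $k-1$, the set $K:=\conv\{z\psi : z\in\C,\ |z|\le1,\ \psi\text{ such a phase}\}$ is a compact convex body with $MK=\{\sum_i\alpha_i\psi_i : \sum_i|\alpha_i|\le M,\ \psi_i\text{ such phases}\}$ and support function $g\mapsto\max_{\psi}|\langle\psi,g\rangle|$; likewise the $L_1$-unit ball $B$ has support function $\|\cdot\|_{L_\infty}$. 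If~\eqref{eq:phi_approx} failed for this choice of $M$ and $\eps$, then $\phi\notin MK+\eps B$, so the separation theorem yields a $g$ which, after rescaling to $\|g\|_{L_\infty}\le 1$, satisfies $\operatorname{Re}\langle\phi,g\rangle > M\max_{\psi}|\langle\psi,g\rangle|+\eps$, contradicting the inequality just derived. Hence $\phi\in MK+\eps B$, which is exactly~\eqref{eq:phi_approx}; and $M=M(\eps,k,p)$ since $c$ is independent of $n$.

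The real content lies entirely in the two quoted inputs -- the generalized von Neumann estimate and, above all, the deep $U^k$-inverse theorem -- so I expect no genuine obstacle beyond routine bookkeeping: pinning down the Cauchy--Schwarz complexity of the system $\{x+i_jy\}_j\cup\{y\}$ (together with the reduction to distinct $i_j$ modulo $p$), and handling the complex scalars and the $L_1$--$L_\infty$ normalization carefully in the separation step.
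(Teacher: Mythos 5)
Your proposal is correct and follows exactly the route the paper itself indicates for Proposition~\ref{prop:L1approx} (which it states without a detailed proof, attributing it to the $U^k$-inverse theorem over $\F_p^n$ combined with Hahn--Banach duality as in~\cite{Gowers:2010}): a generalized von Neumann bound placing dual functions in the dual of the $U^k$-ball, the inverse theorem with constants independent of~$n$, and a finite-dimensional separation argument. The details you flag (merging repeated $i_j$ modulo $p$, the complexity-$(k-1)$ count, and the $L_1$/$L_\infty$ normalization in the duality step) are handled correctly.
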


While facts like this (in particular over~$\Z_N$) can be useful in higher-order Fourier analysis~\cite{Gowers:2010}, for other applications in additive combinatorics it is preferable to have more precise control over the error function~$\tau$ in~\eqref{eq:phi_approx}.
A natural finite-field analog of a question raised by Frantzikinakis in~\cite[Problem~1]{Frantzikinakis:2016b}  (see also~\cite{Altman:2019}) asks if this error function can be bounded \emph{everywhere}, that is, if Proposition~\ref{prop:L1approx} still holds with~$\|\tau\|_{L_\infty} \leq \eps$.
The apparent expectation of a positive answer to Frantzikinakis's question motivated conjectures on a poorly-understood probabilistic variant of Szemer\'edi's theorem on arithmetic progressions (cf.\ Section~\ref{sec:ldcs}).
Our main result, however, shows that in the finite-field setting, the answer is negative.

\begin{theorem}\label{thm:main}
For infinitely many primes~$p$, there is a $k = k(p)\in\N$ and an integer vector~$\bfi\in \Z_{\geq 0}^k$ such that~\eqref{eq:phi_approx} cannot hold with~$\|\tau\|_{L_\infty}\leq \eps$.
\end{theorem}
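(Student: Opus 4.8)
The plan is to argue by contradiction. Fix the prime $p$ and the order $k$ (with $p\ge k+1$) and suppose that Proposition~\ref{prop:L1approx} holds with $\|\tau\|_{L_\infty}\le\eps$ for every $\bfi\in\Z_{\ge0}^k$, every $n$, and every $\phi\in\Delta_\bfi$, with a bound $M=M(\eps,k,p)$. The engine of the reduction is an elementary duality observation: since $p\ge k+1$, every polynomial phase $\psi=e(2\pi i P/p)$ of degree at most $k-1$ is classical, so the $k$-fold additive derivative of $P$ vanishes identically and hence $\|g\bar\psi\|_{U^k}=\|g\|_{U^k}$ for every $g:\F_p^n\to\C$; combined with monotonicity of the Gowers norms and $\|h\|_{U^1}=|\Exp h|$ this gives $|\langle \psi,g\rangle|=\|g\bar\psi\|_{U^1}\le\|g\|_{U^k}$, where $\langle a,b\rangle=\Exp_x a\bar b$. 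Consequently, if for a budget $C$ as large as we please we can exhibit an integer vector $\bfi$, a dual function $\phi\in\Delta_\bfi$ over some $\F_p^n$, and a ``test'' $g:\F_p^n\to\C$ with $\|g\|_{L_1}\le 1$, with $|\langle\phi,g\rangle|\ge 3\eps$, and with $\|g\|_{U^k}\le\eps/C$, then pairing the hypothetical decomposition $\phi=\sum_i\alpha_i\psi_i+\tau$ against $g$ yields $3\eps\le\|\alpha\|_{\ell_1}\max_i|\langle\psi_i,g\rangle|+\|\tau\|_{L_\infty}\|g\|_{L_1}\le M\eps/C+\eps<3\eps$ as soon as $C>M$, a contradiction. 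Thus Theorem~\ref{thm:main} reduces to producing such a triple $(\bfi,\phi,g)$ where, as $n\to\infty$, the test $g$ has $L_1$-norm at most $1$ and \emph{vanishing} $U^k$-norm, yet still has a fixed nonzero correlation with an order-$k$ dual function. This is what the title calls a \emph{high-entropy} dual function: any honest polynomial-phase decomposition of $\phi$ is then forced to spread its $\ell_1$-mass over super-polynomially many phases.

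The matching pair $(\phi,g)$ is constructed from a locally decodable code over $\F_p$, along the lines indicated in Section~\ref{sec:ldcs}. The relevant feature of a smooth $q$-query linear LDC $\enc:\F_p^m\to\F_p^N$ is that for each message coordinate $j$ there is a distribution over $q$-subsets of coordinates, hitting each coordinate with probability $O(1/N)$, together with scalars in $\F_p^\ast$, whose associated linear combination of codeword symbols equals the $j$-th message symbol identically. Working over an ambient space built from the message space, one lets the defining functions $f_1,\dots,f_q$ of the dual function be characters of $\F_p$ composed with the linear forms given by the columns of the generator matrix, arranged along a shift pattern $\bfi$ of length $q=k$, so that $\phi$ records, as its argument ranges over the message space, the outcome of the decoder; and one lets $g$ be the normalised weight function of the decoding configurations. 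Smoothness of the code makes $\|g\|_{L_1}$ small, the perfect reconstruction property makes $|\langle\phi,g\rangle|$ bounded below, and --- this is the crux --- one must verify that the decoder's local views are \emph{pseudorandom of order $k$}, i.e.\ that the relevant $k$-th Gowers averages of $g$ tend to $0$. Establishing this forces the use of codes whose decoding configurations carry enough algebraic structure (for instance Reed--Muller-type codes, where the local views are random affine lines and the $U^k$-averages reduce to equidistribution of explicit multilinear polynomials modulo $p$), which is exactly what ties the admissible order $k$ to the field size $p$ and restricts the argument to an infinite set of primes rather than to all primes.

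The main obstacle, where essentially all the work lies, is this simultaneous balancing act. A dual function of order $k$ is morally a bounded-complexity object assembled from degree-$\le k-1$ data, so \emph{a priori} any $g$ correlating with it ought to correlate with a low-degree polynomial phase and hence have non-negligible $U^k$-norm; the only escape is to make $g$ highly spiky --- $L_\infty$-norm growing with $n$ while $L_1$-norm stays bounded --- and supported on a set structured enough to be ``seen'' by the product structure of $\phi$ yet random enough to be $U^k$-uniform. A single point mass is too spiky: after the rescaling needed to make $|\langle\phi,g\rangle|$ of constant size its $U^k$-norm blows up; and a spread-out smooth $g$ fails to correlate with $\phi$ at all. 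The local-decoding structure of an LDC --- a family of low-weight, well-spread linear dependencies --- is precisely the intermediate object that threads this needle. The remaining steps are bookkeeping: checking that the chosen code has the required smoothness and order-$k$ pseudorandomness, verifying that $p\ge k+1$ so that Proposition~\ref{prop:L1approx} and the classical-polynomial derivative identity both apply, and assembling the quantitative estimates so that the budget $C$ can indeed be taken larger than $M(\eps,k,p)$.
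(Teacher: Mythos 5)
Your reduction has a fatal flaw at its crux: the triple $(\bfi,\phi,g)$ you need cannot exist. You ask for an order-$k$ dual function $\phi\in\Delta_{\bfi}$ and a test $g$ with $\|g\|_{L_1}\leq 1$, $|\langle\phi,g\rangle|\geq 3\eps$, and $\|g\|_{U^k}\leq \eps/C$ with $C$ arbitrarily large. But the generalized von Neumann (Cauchy--Schwarz) inequality applies here with no boundedness assumption on $g$: writing $\langle\phi,g\rangle=\Exp_{x,y}\,\overline{g(y)}\,f_1(x+i_1y)\cdots f_k(x+i_ky)$, the system $\{y,\,x+i_1y,\dots,x+i_ky\}$ consists of pairwise independent forms in two variables (the shifts are distinct modulo $p$ in the relevant range), its Cauchy--Schwarz complexity at the form $y$ is $k-1$, and each Cauchy--Schwarz step only uses $|f_j|\leq 1$ --- which holds by the definition of $\Delta_{\bfi}$ --- never a bound on $g$. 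Hence $|\langle\phi,g\rangle|\leq\|g\|_{U^k}$ for \emph{every} $g$, spiky or not (for $k=2$ this is immediate from Fourier inversion, and the general case is the standard repeated Cauchy--Schwarz argument). So any $g$ with vanishing $U^k$-norm has vanishing correlation with every order-$k$ dual function, and the ``pseudorandom decoding configurations'' you propose to extract from an LDC cannot have the three properties simultaneously. Your preliminary inequality $|\langle\psi,g\rangle|\leq\|g\|_{U^k}$ for degree-$(k-1)$ phases is correct, but it is used in the wrong direction: it replaces the condition you actually need --- that $\max_\psi|\langle\psi,g\rangle|$ be small over the polynomial phases --- by the strictly stronger and unattainable condition that $\|g\|_{U^k}$ be small. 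For unbounded $g$ these are genuinely different (no inverse theorem applies), and this gap is precisely where the theorem lives.

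The repairable core of your idea --- a Hahn--Banach-type certificate $g$ with $\|g\|_{L_1}$ bounded, noticeable correlation with some $\phi\in\Delta_{\bfi}$, and small correlation with every phase $\psi$ of degree at most $k-1$ --- is what the paper establishes, but not via a single deterministic pseudorandom $g$: it is done by a counting/entropy argument. Yekhanin's matching-vector construction (Theorem~\ref{thm:yekhanin}, Proposition~\ref{prop:finite_field}) embeds a hypercube $\{1,z\}^D$ with $\Re(z)\leq 0$ and $|D|\gg_p n^t$, $t=\ord_p(2)$, into the restriction of $\Delta_{\bfi(P)}$ to a set $D$ of differences; then a Rademacher-width/Khintchine argument (Lemma~\ref{lem:cube}) shows that any family of at most $2^{o(n^t)}$ bounded functions fails to $\eps$-cover this hypercube in $\ell_\infty$ after taking $\ell_1$-combinations of size $M$, while there are only $p^{O(n^{k-1})}$ polynomial phases of degree at most $k-1$. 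The essential point your sketch misses, besides the impossibility above, is the arithmetic needed to make $k-1<t$: one must produce, for infinitely many primes $p$ (via Stewart's theorem on $p(2^n-1)$ and sparse irreducible factors of the cyclotomic polynomial $\Phi_p$ over $\F_2$, Lemma~\ref{lemma:inf_many_sparse_pols}), a polynomial of support size $k\leq t$ with a root of order $p$ in $\F_{2^t}^*$, so that the phase count $p^{O(n^{k-1})}$ is beaten by the hypercube dimension $n^t$. In particular $k$ depends on $p$ and $p\geq k+1$ need not hold; fixing $p$ and $k$ in advance, as your first paragraph does, is not how the statement is quantified.
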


Special cases of Theorem~\ref{thm:main} show that for $k = 3$ and~$p = 2^t - 1$ a Mersenne prime, the decomposition~\eqref{eq:phi_approx} requires polynomial phases of degree at least~$t$ for fixed $\eps,M$ and~$\|\tau\|_{L_\infty} \leq \eps$.
The largest known Mersenne prime as of January 2018 has~$t = 77,232,917$~\cite{GIMPS}.

\subsection{Locally decodable codes and random Szemer\'edi} 
\label{sec:ldcs}

The examples behind Theorem~\ref{thm:main} originate from constructions of special types of error-correcting codes called \emph{locally decodable codes} (LDCs).
These codes have the property that any single encoded message symbol can be retrieved from a codeword with good probability by reading only a tiny number of codeword symbols, even if the codeword is partially corrupted.
LDCs originated in complexity theory~\cite{BK95, AS98,ALMSS98} and cryptography~~\cite{CKGS98} and were  defined  in the context of channel coding in~\cite{Katz:2000}.
They have since found many other applications in computer science and mathematics, for instance in fault tolerant distributed storage systems~\cite{GHSY12} and Banach space geometry~\cite{Briet:2012d}. 
We refer to~\cite{Yekhanin:2012, Gopi:2018} for extensive surveys.

Despite their ubiquity, LDCs are poorly understood.
Of particular interest is the tradeoff between the codeword length~$N$ as a function of message length~$k$ when the \emph{query complexity}---the number of probed codeword symbols---and alphabet size are constant.
The Hadamard code is a 2-query LDC of length~$N = 2^{O(k)}$ and this length is optimal in the 2-query regime~\cite{Kerenidis:2004}.
For~$q\geq 3$, the best-known lower bounds show that any $q$-query LDC has at least polynomial  length $k^{1+1/(\ceil{q/2}-1)-o(1)}$~\cite{Kerenidis:2004,Woodruff:2007a}.
The family of Reed-Muller codes, which generalize the Hadamard code, were for a long time the best-known examples, giving $q$-query LDCs of length~$ \exp(O(k^{1/(q-1)}))$.

In a breakthrough result, Yekhanin~\cite{Yekhanin:2007} constructed an entirely new family of vastly shorter  LDCs.
For each Mersenne prime $p = 2^t - 1$, he gave a 3-query LDC of length $N \leq \exp(O(k^{1/t}))$.
The construction uses a family of~$k$ homomorphisms from~$\F_p^n$ to the multiplicative subgroup of~$\F_{2^t}$.
The homomorphisms are constructed using a family of \emph{matching vectors} $(u_i,v_i)_{i\in[k]}$, which are pairs of orthogonal vectors in~$\F_p^n$ such that the inner products $\langle u_i,v_j\rangle$ with $i\ne j$ belong to a special subset of~$\F_p^*$.
It is this construction that forms the basis for Theorem~\ref{thm:main}.

Subsequently, Efremenko~\cite{Efremenko:2009} constructed much larger matching vector families over~$\Z_m^n$ for composite moduli~$m$ and used Yekhanin's framework to give the first 3-query LDCs of subexponential length $N \leq \exp(\exp(O\sqrt{\log k\log\log k}))$.
But huge gaps persist between the best-known upper and lower bounds for constant-query LDCs.\\

In contrast with other combinatorial objects such as expander graphs, the probabilistic method has so far not been successfully used to beat the best explicit LDC constructions.
In~\cite{BDG:2019}, a probabilistic framework was given that could in principle yield best-possible LDCs, albeit non-constructively.
A special instance of this framework connects LDCs with a probabilistic version of Szemer\'edi's theorem alluded to above. The setup for this is as follows:

For a finite abelian group~$G$ of size $N =|G|$, let $D\subseteq G$ be a random subset where each element is present with probability~$\rho$ independently of all others.
For $k \geq 3$ and $\eps\in (0,1)$, let~$E$ be the event that every subset $A\subseteq G$ of size $|A|\geq \varepsilon |G|$ contains a proper $k$-term arithmetic progression with common difference in~$D$. 
If ~$\rho = 1$, then it follows from the Density Hales--Jewett Theorem~\cite{FurstenbergK:1991} that~$E$ holds with probability~1 provided~$N$ is large enough in terms of~$k$ and~$\eps$.
It is an open problem to determine the smallest value of~$\rho$ --- which we will denote by~$\rho_k$ --- such that $\Pr[E] \geq \frac{1}{2}$.
This value will depend on~$\eps$ too, but we will suppress this in the notation and assume~$\eps$ is a fixed constant.
It is also assumed  that $N$ is large enough so that~$\rho_k$ exists.

In~\cite{BDG:2019} it is shown that there exist~$k$-query LDCs of message length~$\Omega(\rho_k N)$ and codeword length~$O(N)$.
As such, Szemer\'edi's theorem with random differences, in particular lower bounds on~$\rho_k$, can be used to show the existence of LDCs.
Conversely, this connection indirectly implies the  best-known upper bounds on~$\rho_k$ for all $k\geq 3$, given by $N^{-(1 - o(1))/\ceil{k/2}}$ \cite{Frantzikinakis:2012,BG:2018}. 
However, a conjecture of Frantzikinakis, Lesigne and Wierdl~\cite{FLW16} states that over $\Z_N$ we have $\rho_k \ll_kN^{-1}\log N$ for all~$k$, which would be best-possible.
Truth of this conjecture would imply that over this group,  Szemer\'edi's theorem with random differences cannot give LDCs better than the Hadamard code.
For finite fields, Altman~\cite{Altman:2019} showed that this conjecture is false.
In particular, over~$\F_p^n$ for~$p$ odd, he proved that~$\rho_3 \gg p^{-n}\, n^2$; generally, $\rho_k \gg p^{-n}\, n^{k-1}$  holds when $p \geq k+1$~\cite{Briet:2019}.
In turn, these bounds are  conjectured to be optimal for the finite-field setting, which would imply that over finite fields, Szemer\'edi's theorem with random differences cannot give LDCs better than Reed-Muller codes. 

These conjectures appear to be motivated mainly by the possibility of an $L_\infty$-version of Proposition~\ref{prop:L1approx} (and analogous variants over~$\Z_N$) with dual functions based on 3-term progressions.
Theorem~\ref{thm:main} falls short of obstructing this route to obtaining optimal bounds in the finite-field setting for two reasons.
First, our examples do not include  ``arithmetic-progression dual functions,'' those with ${\bf i} = (0,1,\dots,(k-1))$; in fact in the Appendix we show that our current framework cannot give such examples.
Second, even if we had such examples, they do not appear to imply any new lower bounds on~$\rho_k$.
Nevertheless, we do not expect arithmetic progressions to be exceptional patterns for which there are no such examples.

\begin{remark}
Ideas behind Theorem~\ref{thm:main} recently inspired similar examples in the integer setting for 3-term progressions in joint work of the first author and Green~\cite{BG:2020}.
\end{remark}

\subsection*{\bf Acknowledgements}
We thank Xuancheng Shao for introducing us to the problem of estimating entropy numbers of dual functions and for showing us Proposition~\ref{prop:L1approx},
Daniel Altman, Ben Green and Nikos Frantzikinakis for helpful discussions,
Igor Shparlinski for pointing us to the reference~\cite{stewart2013divisors},
and an anonymous referee for pointing out an error in a previous claim on a strengthening of the main result under the assumption of the generalized Riemann hypothesis.

\section{Preliminaries}

We will identify the set of maps $G\to\C$ with~$\C^G$.
For a polynomial $P(x)=\sum_{\iota =0}^tc_\iota x^\iota$, define its \emph{support} $\bfi(P)$ to be the sequence of degrees $\iota\in \Z_{\geq 0}$ such that $c_{\iota} \ne 0$, arranged in increasing order.
The \emph{support size} is the length of~$\bfi(P)$.
We will use some basic facts from the theory of finite fields, for which we refer to \cite{lidl1997finite}.
The Minkowski sum of two sets $A,B\subseteq \C^n$ is the set given by
\beqn
A+B = \{a+b\st a\in A,\, b\in B\}.
\eeqn
We will use the following slight generalization of the notion of the convex hull, where we allow for complex coefficients. For a compact set~$A\subseteq \C^n$, define
\beqn
\conv_\C(A) = \Big\{\sum_{a\in A} \alpha_a a \st \alpha_a\in \C \quad \forall a\in A, \quad\sum_{a\in A}|\alpha_a| \leq 1\Big\}.
\eeqn
For a finite set $A\subseteq \D^n$ and $\eps,M\in (0,\infty)$, define $\ent(A,\eps ,M)$ to be the smallest size of a finite set $B\subseteq M\D^n$ such that \beqn
A\subseteq \conv_\C(B) + \eps\D ^n.
\eeqn
Then, for any $a\in A$, there is a $b\in \conv_\C(B)$ such that $\|a - b\|_{\ell_\infty} \leq \eps$
and so $\ent(A,\eps,M)$ is a restricted form of the covering number of~$A$ relative to the~$\ell_\infty$ distance.
Note that for $I\subseteq [n]$, the projection of~$A$ to the set of coordinates~$I$, given by $A_I = \{(a_i)_{i\in I} \st a\in A\}$, is contained in $\conv_\C(B_I) + \eps\D^I$.
Since $|B| \geq |B_I|$, it follows that
\beq\label{eq:project}
\ent(A,\eps ,M) \geq \ent(A_I,\eps,M).
\eeq

\section{Covering numbers from hypercubes}

We will use the following lemma, which shows that containment of a high-dimensional hypercube implies a large restricted covering number.

\begin{lemma}\label{lem:cube}
Let $c>0$,  $z\in \C$ be a complex number such that $\Re(z) \leq 0$ and let $S\subseteq \C^k$ be a finite set such that $\{c,z\}^k \subseteq S$. 
Then, for any $\eps \in (0,\frac{c}{2})$ and $M > 0$, we have that 
\beqn
\log_2\big(\ent(S,\eps ,M)\big)
\gg_{c,\eps,M}
k.
\eeqn
\end{lemma}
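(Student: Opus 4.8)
The plan is to prove the contrapositive in quantitative form: if $B\subseteq M\D^k$ is any finite set with $\{c,z\}^k\subseteq\conv_\C(B)+\eps\D^k$, then $|B|\ge\exp(\Omega_{c,\eps,M}(k))$. This suffices: enlarging $S$ beyond $\{c,z\}^k$ can only increase $\ent(S,\eps,M)$ (an admissible cover of $S$ covers every subset), and $\ent(S,\eps,M)=\infty$ --- in which case there is nothing to prove --- unless the corners $c\mathbf 1,z\mathbf 1$ lie in $\conv_\C(B)+\eps\D^k\subseteq(M+\eps)\D^k$, i.e.\ unless $|c|,|z|\le M+\eps$. So fix such a $B$ with $|B|=\ent(S,\eps,M)$, and assume $|c|,|z|\le M+\eps$.

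The heart of the argument is to pair each of the $2^k$ corners of the cube against a sign functional, using $\Re(z)\le 0$. For $\sigma\in\{-1,1\}^k$ let $x^\sigma\in\{c,z\}^k$ be the corner with $x^\sigma_j=c$ when $\sigma_j=1$ and $x^\sigma_j=z$ when $\sigma_j=-1$, and set $L_\sigma(x)=\sum_{j=1}^k\sigma_j\Re(x_j)$. Since $\Re(z)\le 0<c$,
\[
L_\sigma(x^\sigma)=c\,\#\{j:\sigma_j=1\}-\Re(z)\,\#\{j:\sigma_j=-1\}\ \ge\ c\,\#\{j:\sigma_j=1\}.
\]
On the other hand, writing $\langle\sigma,b\rangle:=\sum_j\sigma_j b_j\in\C$, any $v=\sum_b\alpha_b b\in\conv_\C(B)$ satisfies $L_\sigma(v)=\Re\big(\sum_b\alpha_b\langle\sigma,b\rangle\big)\le\sum_b|\alpha_b|\,|\langle\sigma,b\rangle|\le\max_{b\in B}|\langle\sigma,b\rangle|$, and $L_\sigma(e)\le k\eps$ for $e\in\eps\D^k$. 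Combining these with the inclusion $x^\sigma\in\conv_\C(B)+\eps\D^k$ gives, for every $\sigma$,
\[
\max_{b\in B}|\langle\sigma,b\rangle|\ \ge\ c\,\#\{j:\sigma_j=1\}-k\eps.
\]

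Now take $\sigma$ uniform in $\{-1,1\}^k$. With probability at least $\tfrac12$ one has $\#\{j:\sigma_j=1\}\ge k/2$, and then the last display yields $\max_{b\in B}|\langle\sigma,b\rangle|\ge k(\tfrac c2-\eps)=:k\gamma$, where $\gamma>0$ precisely because $\eps<\tfrac c2$ --- this is the only place that hypothesis is used. For a fixed $b\in B$ the complex Rademacher sum $\langle\sigma,b\rangle$ has real and imaginary parts that are each sub-Gaussian with variance proxy $\sum_j|b_j|^2\le kM^2$, so Hoeffding's inequality gives $\Pr\big[|\langle\sigma,b\rangle|\ge k\gamma\big]\le 4\exp(-\gamma^2k/4M^2)$. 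A union bound over $B$ then forces
\[
\tfrac12\ \le\ \Pr\Big[\max_{b\in B}|\langle\sigma,b\rangle|\ge k\gamma\Big]\ \le\ 4|B|\exp(-\gamma^2k/4M^2),
\]
whence $|B|=\ent(S,\eps,M)\ge\tfrac18\exp(\gamma^2k/4M^2)$ and $\log_2\ent(S,\eps,M)\gg_{c,\eps,M}k$.

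I expect the only real subtlety to be the first half: extracting from the mere set inclusion $\{c,z\}^k\subseteq\conv_\C(B)+\eps\D^k$ a clean inequality of the form ``for a constant fraction of sign patterns $\sigma$, some $b\in B$ has $|\langle\sigma,b\rangle|$ of size $\Omega(k)$''. The key realizations are that one does not need the $2^k$ corners to be pairwise far apart in the covering --- it is enough that half of the sign patterns are ``heavy'' --- and that the role of the condition $\Re(z)\le 0$ is exactly to make $L_\sigma(x^\sigma)$ at least $c\cdot\#\{j:\sigma_j=1\}$ (this is also what makes it harmless that $z$ may be complex and that $\conv_\C$ allows complex coefficients). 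Once that inequality is in hand, the conclusion is a routine first-moment/Hoeffding estimate, and the resulting bound is dimension-free in the sense that the implied constant depends only on $c,\eps,M$ and not on $k$.
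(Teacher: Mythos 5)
Your proof is correct and follows essentially the same route as the paper: both test the corners of the cube against uniformly random sign vectors $\sigma$, use $\Re(z)\le 0$ so that the corner matched to $\sigma$ contributes at least $c\cdot\#\{j:\sigma_j=1\}$, and exploit that $\langle\sigma,b\rangle$ is sub-Gaussian for each bounded $b\in B$, so a small $B$ cannot produce maxima of size $\Omega(k)$. The only difference is bookkeeping: the paper compares expectations of $\max_{b\in B}|\langle\sigma,b\rangle|$ via Jensen and the Khintchine inequality, whereas you compare tails via Hoeffding plus a union bound; both yield $\log_2\ent(S,\eps,M)\gg_{c,\eps,M}k$.
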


\begin{proof}
Let~$\theta$ be a uniformly distributed $\pmset{k}$-valued random vector.
For a compact set $A\subseteq \C^k$, define
\beqn
w(A) = \Exp \max_{a\in A} |\langle a,\theta\rangle|.
\eeqn
We use the following basic properties:
\begin{enumerate}
\item[1.] If $A\subseteq B$, then $w(A) \leq w(B)$.
\item[2.] For a finite set $A\subseteq \C^k$, it holds that $w(\conv_\C(A)) = w(A)$.
\item[3.] For $A,B\subseteq \C^k$ finite, it holds that $w(A+B) \leq w(A) + w(B)$.
\end{enumerate}

It follows from the first property that 
\beq\label{eq:lbw}
w(S) \geq w(\{c,z\}^k) \geq \frac{ck}{2}.
\eeq
For the second inequality, observe that for fixed $\theta \in \pmset{k}$, we have
\begin{align*}
\max_{a\in \{c,z\}^k}|\langle a,\theta\rangle| 
&\geq
\Big|\sum_{i: \theta_i  = 1} c - \sum_{i: \theta_i = -1}z\Big|\\
&\geq 
\Big|\Re\Big(\sum_{i: \theta_i  = 1} c - \sum_{i: \theta_i = -1}z\Big)\Big|\\
&\geq   
c|\{i\in [k] \st \theta_i = 1\}|.
\end{align*}
Averaging over~$\theta$ then gives the result.

Let $B\subseteq M\D^k$ be a finite set such that $S\subseteq \conv_\C(B) + \eps\D^k$.
Let $l = |B|$ and $p = \log_2 l$.
By the second property of~$w$, Jensen's inequality and the Khintchine inequality~\cite[Chapter~5]{MilmanSchechtman},
\begin{align*}
w\big(\conv_\C(B)\big)
&=
\Exp \max_{b\in B}|\langle b,\theta\rangle|\\
&\leq
\Exp\Big( \sum_{b\in B}|\langle b,\theta\rangle|^p\Big)^{\frac{1}{p}}\\
&\leq
\Big( \sum_{b\in B}\Exp|\langle b,\theta\rangle|^p\Big)^{\frac{1}{p}}\\
&\ll
\sqrt{p}\Big( |B| \max\{\|b\|_{\ell_2}^p \st b\in B\}\Big)^{\frac{1}{p}}\\
&\ll
M \sqrt{k\log l}.
\end{align*}

We also have $w(\eps\D^k) = \eps k$.
Since $S \subseteq \conv_\C(B) + \eps\D^k$, the second and third properties of~$w$ and~\eqref{eq:lbw} then give
\beqn
\frac{ck}{2}
\leq
w(S)
\leq
w(\conv_\C(B)) + \eps\D^k) \ll M\sqrt{k\log_2 l} + \eps k .
\eeqn
Rearranging the left- and right-hand sides now gives the claim.
\end{proof}
\medskip

\section{Locating high-dimensional hypercubes}\label{section:yekhanin}

Here we show that for certain primes~$p$ and some integer vectors~${\bf i}$, the dual functions in~$\Delta_{\bf i}$ over~$\F_p^n$ contain high-dimensional hypercubes.

\begin{proposition}\label{prop:finite_field}
Let $p,r$ be distinct primes, let $t=\ord_p(r)$ and let $G = \F_p^n$.
Suppose there exists a polynomial $P(x)\in \F_{r}[x]$ that has a root in~$\F_{r^t}^*$ of order~$p$ and such that $P(1) \ne 0$. 
Then, there exists a $z\in \C$ with $\Re(z) \leq 0$ and a set $D\subseteq G$ of size $|D| \gg_p n^t$ such that
\beqn
\{z,1\}^D\subseteq \Delta_{\bfi(P)}^D.
\eeqn
\end{proposition}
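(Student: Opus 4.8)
The plan is to produce, for each desired $\{z,1\}$-valued pattern on a large set $D\subseteq\F_p^n$, a single dual function in $\Delta_{\bfi(P)}$ realizing it, taking the coordinates of $D$ to be matching vectors and exploiting the relation $P(\gamma)=0$ to annihilate all ``off-diagonal'' contributions. \emph{Setup.} Write $q=r^t$, so that $\gamma\in\F_q^*$ has order $p$ and $\F_r\subseteq\F_q$; let $H=\langle\gamma\rangle$, a subgroup of $\F_q^*$ of order $p$, and write $P(x)=\sum_{l=1}^k c_lx^{i_l}$ with $c_l\in\F_r^*$ and $\bfi(P)=(i_1,\dots,i_k)$. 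Since $P$ has coefficients in $\F_r$, applying the Frobenius $x\mapsto x^r$ to $P(\gamma)=0$ repeatedly gives $P(\gamma^{r^j})=0$ for all $j\geq 0$; hence $P$ vanishes at every $\gamma^s$ with $s\in S_0:=\{r^j\bmod p : 0\le j<t\}$, and $|S_0|=\ord_p(r)=t$ while $0\notin S_0$. On the other hand $P(\gamma^0)=P(1)\ne 0$ by hypothesis.

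\emph{Matching vectors and the dual functions.} I will invoke the combinatorial input underlying Yekhanin's LDCs~\cite{Yekhanin:2007}: over $\F_p^n$ there is a family of pairwise distinct nonzero vectors $u_1,\dots,u_m$ with $\langle u_j,u_j\rangle\equiv 0\pmod p$ and $\langle u_i,u_j\rangle\bmod p\in S_0$ for all $i\ne j$, with $m\gg_p n^t$ (the exponent $t=|S_0|$ is precisely what lets such families be this large). Put $D=\{u_1,\dots,u_m\}$, so $|D|=m\gg_p n^t$. Fix a nontrivial additive character $\Psi$ of $\F_q$, to be chosen at the end. For $A\subseteq[m]$ define
\beqn
f^{(A)}_l(x)=\Psi\Big(c_l\sum_{j\in A}\gamma^{\langle u_j,x\rangle}\Big)\qquad(l\in[k]),
\eeqn
each of modulus $1$, hence a valid map $\F_p^n\to\D$, and let $\phi_A\in\Delta_{\bfi(P)}$ be the associated dual function $\phi_A(y)=\Exp_x\prod_{l=1}^k f^{(A)}_l(x+i_l y)$.

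\emph{The computation.} Using that $\Psi$ is a character (so $\Psi(a+b)=\Psi(a)\Psi(b)$), the bilinearity of $\langle\cdot,\cdot\rangle$, and $\sum_l c_l(\gamma^v)^{i_l}=P(\gamma^v)$, one computes
\beqn
\prod_{l=1}^k f^{(A)}_l(x+i_l y)=\Psi\Big(\sum_{j\in A}\gamma^{\langle u_j,x\rangle}\,P\big(\gamma^{\langle u_j,y\rangle}\big)\Big).
\eeqn
Evaluating at $y=u_{j_0}$: for $j\ne j_0$ the inner product $\langle u_j,u_{j_0}\rangle\bmod p$ lies in $S_0$, so $P(\gamma^{\langle u_j,u_{j_0}\rangle})=0$, while for $j=j_0$ it equals $P(\gamma^0)=P(1)$; hence the argument of $\Psi$ is $P(1)\gamma^{\langle u_{j_0},x\rangle}$ if $j_0\in A$ and $0$ otherwise. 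Since $u_{j_0}\ne 0$, the map $x\mapsto\langle u_{j_0},x\rangle$ is uniform on $\F_p$, so
\beqn
\phi_A(u_{j_0})=\begin{cases}1,&j_0\notin A,\\ z_\Psi,&j_0\in A,\end{cases}\qquad z_\Psi:=\frac1p\sum_{h\in H}\Psi\big(P(1)h\big),
\eeqn
with $z_\Psi$ independent of $j_0$ and $A$. Thus $\phi_A|_D$ is exactly the $\{z_\Psi,1\}$-valued pattern taking value $z_\Psi$ on $\{u_j:j\in A\}$, so $\{z_\Psi,1\}^D\subseteq\Delta_{\bfi(P)}^D$ for every nontrivial $\Psi$.

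\emph{Choosing $\Psi$, and the main obstacle.} Finally, writing $\Psi(x)=\Psi_1(\nu x)$ for a fixed nontrivial additive character $\Psi_1$ and $\nu\in\F_q^*$, the quantity $z_\Psi=\tfrac1p\sum_{h\in H}\Psi_1(\nu P(1)h)$ depends only on the coset $\nu P(1)H\in\F_q^*/H$; as $P(1)\ne 0$ every coset arises, and summing the values $z_\Psi$ over a transversal of $\F_q^*/H$ gives $\tfrac1p\sum_{x\in\F_q^*}\Psi_1(x)=-\tfrac1p<0$, so some coset yields $\Re(z_\Psi)<0$. Fixing such a $\Psi$ and setting $z=z_\Psi$ completes the proof. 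The only nontrivial external ingredient is the matching-vector family of size $\gg_p n^t$ with off-diagonal inner products in the cyclotomic coset $S_0$ — this is the step carrying all the weight (its size matching the Ray-Chaudhuri--Wilson bound), while everything else is the short character computation above. (If one only has an asymmetric family $(u_j,v_j)$ with $\langle u_j,v_j\rangle\equiv 0$ and $\langle u_i,v_j\rangle\bmod p\in S_0$ for $i\ne j$, the identical argument works with $D=\{v_j\}$ and the $f^{(A)}_l$ built from the $u_j$.)
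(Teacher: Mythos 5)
Your proposal is correct and takes essentially the same route as the paper: your functions $f^{(A)}_l(x)=\Psi\big(c_l\sum_{j\in A}\gamma^{\langle u_j,x\rangle}\big)$ and the resulting $\phi_A$ are exactly the paper's construction, with the cross terms killed because $P$ vanishes on the whole cyclotomic coset $\{\gamma^{r^j}\}$ and with the same averaging-over-characters argument (yours over cosets of $\langle\gamma\rangle$, the paper's over all of $\widehat{\F_{r^t}}$) producing $\Re(z)\le 0$. The only difference is that you invoke the matching-vector family with off-diagonal inner products in $S_0$ and size $\gg_p n^t$ as a black box from Yekhanin, whereas the paper reproves it (Theorem~\ref{thm:yekhanin}); your asymmetric fallback $(u_j,v_j)$ is precisely the form that construction delivers, so the citation is legitimate.
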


The proof of this proposition relies on the following result due to Yekhanin, which is implicit in~\cite{Yekhanin:2007} (and shown explicitly in~\cite{Raghavendra:2007}).
We include a proof for completeness.

\begin{theorem}[Yekhanin]\label{thm:yekhanin}
Let $p,r$ be distinct primes and $t:=\ord_p(r)$. 
For integer~$m>p-1$, let
\beqn
k = {m\choose p-1}
\quad\quad
\text{and}
\quad\quad
n = {m + \frac{p-1}{t}- 1\choose \frac{p-1}{t}}.
\eeqn 
Let 
\beqn
P(x)=\sum_{\iota =0}^sc_\iota x^\iota\in \F_r[x]
\eeqn 
be  a polynomial with a root~$\gamma\in \F_{r^t}^*$ of order~$p$. 
Then, for each $i\in [k]$ there exists a function $f_i\colon \F_p^n\to\F_{r^t}$ and vectors $d_i,w_i\in\F_p^n$ such that for every $x\in \F_p^n$, we have
\beqn
\sum_{\iota =0}^sc_\iota f_i(x+\iota d_j)
=
\left\{\begin{array}{ll}
\gamma^{\langle x,w_i\rangle}P(1) & \text{if $i = j$}\\
0 & \text{otherwise.}
\end{array}
\right.
\eeqn 
\end{theorem}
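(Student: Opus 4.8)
The plan is to build the functions $f_i$ and vectors $d_i, w_i$ explicitly from a matching-vector-type construction on monomials, exploiting the fact that $\gamma \in \F_{r^t}^*$ has order exactly $p$, so that the map $j \mapsto \gamma^j$ gives a well-defined embedding of $\Z_p$ (additively) into $\F_{r^t}^*$ (multiplicatively). First I would set up the ambient combinatorial data: index the $k = \binom{m}{p-1}$ ``messages'' by $(p-1)$-element subsets $S \subseteq [m]$, and index the $n = \binom{m + (p-1)/t - 1}{(p-1)/t}$ ``coordinates'' by multisets of size $(p-1)/t$ from $[m]$ — equivalently by degree-$\tfrac{p-1}{t}$ monomials in $m$ variables. (Here $t \mid p-1$ since $t = \ord_p(r)$ divides $|\F_p^*| = p-1$, so $(p-1)/t$ is an integer and the binomial coefficient makes sense.) The vector $w_i \in \F_p^n$ associated to a subset $S$ should be chosen so that $\langle x, w_i\rangle$, as a function of $x \in \F_p^n$, evaluates a specific symmetric-type polynomial attached to $S$; the direction vectors $d_i$ should likewise be read off from $S$. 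This is exactly the Reed–Muller/BCH-flavored encoding underlying Yekhanin's 3-query LDC.

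The key computation is to produce, for each $i$, an auxiliary function $g_i \colon \F_p^n \to \F_p$ such that the ``derivative along $d_i$'' of $g_i$ — more precisely the linear combination $\sum_{\iota=0}^{s} c_\iota \gamma^{g_i(x + \iota d_i)}$ — telescopes or factors. The mechanism is: arrange that $g_i(x + \iota d_j)$ depends affinely on $\iota$ with a slope $\lambda_{ij} \in \F_p$ that equals a root of $P$ ``in the exponent'' precisely when $i = j$. Concretely, if $g_i(x + \iota d_j) = g_i(x) + \iota \lambda_{ij}$ for a scalar $\lambda_{ij} \in \F_p$ depending only on the pair $(i,j)$ (and the combinatorics guarantees $\lambda_{ii} = $ the distinguished value while $\lambda_{ij}$ for $i \ne j$ ranges over the ``other'' residues), then
\beqn
\sum_{\iota=0}^{s} c_\iota \gamma^{g_i(x+\iota d_j)}
= \gamma^{g_i(x)} \sum_{\iota=0}^{s} c_\iota \gamma^{\iota \lambda_{ij}}
= \gamma^{g_i(x)} \, P(\gamma^{\lambda_{ij}}).
\eeqn
Now $P(\gamma^{\lambda_{ij}})$ vanishes iff $\gamma^{\lambda_{ij}}$ is a root of $P$; since $\gamma$ itself has order $p$, we have $\gamma^{\lambda_{ij}} = \gamma$ exactly when $\lambda_{ij} \equiv 1 \pmod p$, so we should normalize so that the ``diagonal slope'' is $\lambda_{ii} = 1$, giving $P(\gamma) = 0$ — wait, that kills the diagonal. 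So instead we invert: we want $P(\gamma^{\lambda_{ij}}) = 0$ off-diagonal and $P(\gamma^{\lambda_{ii}}) = P(1) \ne 0$ on the diagonal. Thus the correct arrangement is $\lambda_{ii} = 0$ (so $\gamma^{0} = 1$ and the sum is $\gamma^{g_i(x)} P(1)$), while for $i \ne j$ the slope $\lambda_{ij}$ lands among the exponents $e$ with $\gamma^e$ a root of $P$ — and here one uses that the roots of $P$ in $\F_{r^t}^*$ include the full Galois orbit $\{\gamma^{r^0}, \gamma^{r^1}, \dots, \gamma^{r^{t-1}}\}$ of $\gamma$ under Frobenius (since $P \in \F_r[x]$), which corresponds to the exponent set $\{r^j \bmod p : 0 \le j < t\}$. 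The combinatorial heart is therefore: design $g_i$ from the subset/monomial data so that $\{d_j\}$ act on $g_i$ by these prescribed slopes, using that a $(p-1)$-subset can be ``hit'' in a controlled way by the monomial coordinates. Finally set $f_i(x) = \gamma^{g_i(x)}$ and $w_i$ to record the residual $g_i(x)$ as a linear form, so $\gamma^{g_i(x)} = \gamma^{\langle x, w_i\rangle}$.

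The main obstacle will be the precise combinatorial construction of $g_i$ and the direction vectors $d_j$ so that the slopes $\lambda_{ij}$ behave as required — in particular getting $\lambda_{ii} = 0$ and $\lambda_{ij} \in \{r^l \bmod p\}$ for $i \ne j$ simultaneously for all pairs, while keeping $n$ as small as the stated binomial coefficient. This is exactly where the matching-vector condition (orthogonality of $(u_i, v_i)$ and the restricted inner products $\langle u_i, v_j\rangle$) enters, and where the monomial/symmetric-function encoding of $(p-1)$-subsets is needed to make the dimension count $n = \binom{m + (p-1)/t - 1}{(p-1)/t}$ work out. Everything else — the telescoping sum, the Galois-orbit description of the roots of $P$, and checking $\gamma^{g_i(x)} = \gamma^{\langle x, w_i\rangle}$ — is routine once that scaffolding is in place. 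I would carry out the combinatorial setup first, then verify the slope identities $g_i(x + \iota d_j) = g_i(x) + \iota\lambda_{ij}$, then conclude with the displayed factorization and the case analysis on $\lambda_{ij}$.
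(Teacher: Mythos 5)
Your outer scaffolding coincides with the paper's proof: one takes $f_i(x)=\gamma^{\langle x,w_i\rangle}$, so that $\sum_{\iota}c_\iota f_i(x+\iota d_j)=\gamma^{\langle x,w_i\rangle}\sum_{\iota}c_\iota\gamma^{\iota\lambda_{ij}}=\gamma^{\langle x,w_i\rangle}P(\gamma^{\lambda_{ij}})$ with $\lambda_{ij}=\langle d_j,w_i\rangle$, and one arranges $\lambda_{ii}=0$ (giving the factor $P(1)$) while for $i\ne j$ the slope $\lambda_{ij}$ is a power of $r$ modulo $p$, so that $P(\gamma^{r^q})=P(\gamma)^{r^q}=0$ because $P$ has coefficients in $\F_r$. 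However, what you defer as ``the main obstacle'' --- actually producing vectors $w_i,d_j\in\F_p^n$ with this inner-product pattern in dimension $n=\binom{m+(p-1)/t-1}{(p-1)/t}$ --- is precisely the substance of the theorem, and your proposal contains neither of the two ideas that make it work. As it stands, the claimed slope pattern is an unproven assertion rather than a consequence of your setup, so there is a genuine gap.

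The missing construction is short but essential. First, for each $(p-1)$-element subset $S\subseteq[m]$ set $u_S=1_S$ and $v_S=1_{[m]}-1_S$ in $\F_p^m$; then $\langle u_S,v_T\rangle=|S\setminus T|$, which is $0$ iff $S=T$ and otherwise a nonzero residue mod $p$ (here $|S|=p-1<p$ is used). Second --- and this is the number-theoretic point your plan never touches --- with $l=(p-1)/t$, the $l$-th powers in $\F_p^*$ form the unique subgroup of order $t$, which equals $\langle r\rangle$ because $\ord_p(r)=t$; hence $\langle u_S,v_T\rangle^{l}$ is $0$ on the diagonal and of the form $r^q\bmod p$ off the diagonal. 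To realize this $l$-th power as a single inner product in dimension $n$, expand $Q(x)=(x_1+\cdots+x_m)^l=\sum_{\beta\in\mathcal{M}_l}c_\beta x^\beta$ over the $|\mathcal{M}_l|=\binom{m+l-1}{l}=n$ monomials of degree $l$, and set $w_S=(u_S^\beta)_{\beta}$ and $d_S=(c_\beta v_S^\beta)_{\beta}$, so that $\langle w_S,d_T\rangle=Q(u_S\circ v_T)=\langle u_S,v_T\rangle^{l}$. With these vectors your telescoping computation and Galois-orbit (Frobenius) argument go through verbatim and give the stated identity; without them the dimension count and the required values of $\lambda_{ij}$ are not established.
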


\begin{proof}
For a $(p-1)$-element subset~$S\subseteq[m]$, define the vectors $u_S = 1_S$ and $v_S = 1_{[m]} - u_S$ in~$\F_p^m$.
Then, $\langle u_S,v_T\rangle =0$ if and only if $S= T$.
Let~$l = \frac{p-1}{t}$.
Then, for $a\in \F_p^*$, we have $a^l\in \{r^q\st q=0,1,\dots,p-1\}$.

Consider the expansion of the polynomial $Q(x)\in \F_p[x_1,\dots,x_m]$ given by
\beqn
Q(x) = (x_1 + \cdots + x_m)^l = 
\sum_{\beta\in \mathcal M_l} c_\beta x^\beta,
\eeqn
where $\mathcal M_l:=\{\beta\in \Z_{\geq 0}^m\st \sum_{i=1}^m\beta_i=l\}$ and $x^\beta:=\prod_{i=1}^mx_i^{\beta_i}$.
For each subset $S\subseteq[m]$ of size~$p-1$, define the vectors $w_S = (u_S^\beta)_{\beta\in \mathcal M_l}$ and $d_S = (c_\beta v_S^\beta)_{\beta\in \mathcal M_l}$.
Since $x^\beta y^\beta = (x\circ y)^\beta$, where $\circ$ denotes the coordinate-wise product, we have that
\beqn
\langle w_S, d_T\rangle = Q(u_S\circ v_T) = \langle u_S,v_T\rangle^l.
\eeqn
By the above, this equals zero if $S = T$ and a power of~$r$ otherwise.
Moreover, the vectors $w_S$ and~$d_S$ have dimension $|\mathcal M_l|  = {m+l-1\choose l}$.

Define $f_S:\F_p^n \to \F_{r^t}^*$ by
\beqn
f_S(x) = \gamma^{\langle x, w_S\rangle}.
\eeqn
Note that this a homomorphism, because $\gamma$ has order $p$. Then,
\begin{align*}
\sum_{\iota =0}^sc_\iota f_S(x+\iota d_S)
&=
\gamma^{\langle x,w_S\rangle}\sum_{\iota=0}^sc_\iota \gamma^{\iota \langle d_S,w_S\rangle}\\
&=
\gamma^{\langle x,w_S\rangle}\sum_{\iota=0}^sc_\iota\\
&=\gamma^{\langle x,w_S\rangle}P(1).
\end{align*}
If $S\ne T$, then $\langle d_T,w_S\rangle = r^q\bmod{p}$ for some integer~$q$ and therefore,
\begin{align*}
\sum_{\iota =0}^sc_\iota f_S(x+\iota d_T)
&=
\gamma^{\langle x,w_S\rangle}\sum_{\iota=0}^sc_\iota \gamma^{\iota \langle d_T,w_S\rangle}\\
&=
\gamma^{\langle x,w_S\rangle}\sum_{\iota =0}^sc_\iota\gamma^{\iota r^q}\\
&=
\gamma^{\langle x,w_S\rangle}P(\gamma)^{r^q}\\
&=0.
\end{align*}
This completes the proof.
\end{proof}

\medskip

\begin{proof}[ of Proposition~\ref{prop:finite_field}]
Let~$P(x) \in \F_r[x]$ be as in Proposition~\ref{prop:finite_field} and let~$\gamma\in \F_{r^t}^*$ be a $p$-th root of unity  such that~$P(\gamma) = 0$.
Let $f_i:\F_p^n\to \F_{r^t}^*$ and $d_i,w_i\in \F_p^n$ be as in Theorem~\ref{thm:yekhanin}. 
Let $\chi:\F_{r^t}\to \C$ be a nontrivial additive character such that the complex number
\beqn
z:= \Exp_{c\in \F_p}\chi\big(\gamma^cP(1)\big)
\eeqn
satisfies $\Re(z) \leq 0$.
To see that such a character exists, observe that by orthogonality of the characters,
\beqn
\Exp_{\chi \in \widehat{\F_{r^t}}} \Exp_{c\in \F_p}\chi\big(\gamma^cP(1)\big) = 
\Exp_{c\in \F_p}\Big(
\Exp_{\chi \in \widehat{\F_{r^t}}}
\chi\big(\gamma^cP(1)\big)
\Big)=
0.
\eeqn
The existence of the desired character then follows by averaging.
For each $a\in \bset{k}$ and $\iota\in \bfi(P)$, define $F^\iota_a:\F_p^n\to \C$ by
\beq\label{eq:dualF}
F^\iota_a(x) = \chi\Big(c_\iota\sum_{j=1}^k a_j f_j(x)\Big).
\eeq
Based on these functions, we define the dual function $\phi_a:\F_p^n \to \D$ by
\beq\label{eq:dualphi}
\phi_a(y) = \Exp_{x\in \F_p^n}\prod_{\iota\in \bfi(P)} F_a^{\iota}(x + \iota y).
\eeq
Then,
\begin{align*}
\phi_a(d_i)
&=
\Exp_{x\in \F_p^n} \chi\Big(\sum_{j=1}^k a_j \sum_{\iota\in \bfi(P)}c_\iota f_j(x+\iota d_i)\Big)\\
&=
\Exp_{x\in \F_p^n}\chi(a_i \gamma^{\langle x,w_i\rangle}P(1))\\
&=
\Exp_{c\in \F_p}\chi(a_i \gamma^cP(1)).
\end{align*}
The last expectation equals~1 if $a_i = 0$ and~$z$ if~$a_i = 1$ and therefore, 
\beqn
\{1,z\}^k \subseteq \{(\phi(d))_{d\in D} \st \phi\in \Delta_{\bfi(P)}\}.
\eeqn 
Since $k \geq (\frac{m}{p})^{p-1}$, $n \leq (\frac{2etm}{p})^{\frac{p-1}{t}}$ and $t \leq p-1$, we have $k \gg_p n^t$.
\end{proof}

\section{Sparse polynomials over $\F_2$}\label{section:sparse}

The following lemma supplies infinitely many primes and  polynomials that can be used in Proposition~\ref{prop:finite_field} .

\begin{lemma}\label{lemma:inf_many_sparse_pols}
For infinitely many primes~$p$, there is an irreducible polynomial $P(x)\in \F_2[x]$ with support size at most~$t = \ord_p(2)$ and  a root in $\F_{2^t}^*$ of order~$p$.
\end{lemma}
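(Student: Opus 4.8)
The plan is to produce, for infinitely many primes~$p$, an irreducible polynomial over~$\F_2$ whose roots have multiplicative order exactly~$p$ and whose number of nonzero coefficients is at most $t=\ord_p(2)$. The natural source of such polynomials is the factorization of the $p$-th cyclotomic polynomial $\Phi_p(x) = 1 + x + \cdots + x^{p-1}$ over~$\F_2$. Since~$2$ has order~$t$ modulo~$p$, it is standard (see~\cite{lidl1997finite}) that $\Phi_p$ splits over~$\F_2$ into $(p-1)/t$ distinct irreducible factors, each of degree~$t$, and the roots of each such factor are precisely primitive $p$-th roots of unity in~$\F_{2^t}^*$, hence elements of order~$p$. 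So any irreducible factor~$P$ of~$\Phi_p$ in~$\F_2[x]$ automatically has a root in~$\F_{2^t}^*$ of order~$p$; the only real content is to arrange that \emph{some} prime~$p$ (in fact infinitely many) admits such a factor that is also \emph{sparse}, with at most~$t$ nonzero terms.

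The idea is to choose~$p$ so that~$t = \ord_p(2)$ is small relative to~$p$, because then even the trivial bound "support size $\le \deg P + 1 = t+1$" is close to what we want, and a tiny additional argument shaves off the last term. Concretely, I would look for primes~$p$ with $p \equiv \pm 1 \pmod 8$ (so that $2$ is a quadratic residue mod~$p$ and hence $\ord_p(2) \mid (p-1)/2$), and more aggressively, primes~$p$ dividing $2^t - 1$ for a fixed small~$t$: by a theorem on primitive prime divisors of Mersenne-type sequences (Bang/Zsygmondy, or the more refined divisor results of Stewart~\cite{stewart2013divisors} that the acknowledgements point to), the numbers $2^t-1$ have prime factors~$p$ with $\ord_p(2) = t$, and one can make~$t$ grow slowly while~$p$ grows, guaranteeing infinitely many distinct such~$p$. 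For such a~$p$, each degree-$t$ irreducible factor~$P$ of~$\Phi_p$ over~$\F_2$ has support size at most $t+1$; to reduce this to~$t$, observe that $\Phi_p(0) = 1$ forces every irreducible factor to have nonzero constant term, while the sum of all $(p-1)/t$ factors is $\Phi_p$, which has all coefficients equal to~$1$; a counting/parity argument on the coefficient vectors (e.g.\ the top and constant coefficients are both~$1$, and not all~$t+1$ coefficients can be~$1$ unless $P = \Phi_p$ itself, impossible once $(p-1)/t > 1$, i.e.\ $p > t+1$) shows at least one coefficient vanishes, so the support size is at most~$t$.

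The main obstacle I anticipate is the number-theoretic input: one must certify that there are \emph{infinitely many} primes~$p$ for which $\ord_p(2)$ is genuinely smaller than~$p-1$ (a plain density or Artin-type statement is not enough, since we want an unconditional infinitude with good control on~$t$), and one must do this without accidentally forcing $(p-1)/t = 1$, which would make $P = \Phi_p$ and destroy sparsity. The cleanest route is via primitive prime divisors: fix an increasing sequence $t_1 < t_2 < \cdots$ and for each~$j$ pick a primitive prime divisor $p_j$ of $2^{t_j}-1$, so $\ord_{p_j}(2) = t_j$ and $p_j \equiv 1 \pmod{t_j}$ with $p_j \ge 2^{t_j/ \deg}/\text{(small)}$, in particular $p_j > t_j + 1$ for~$j$ large; these~$p_j$ are pairwise distinct, giving the required infinitude. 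The remaining steps — the cyclotomic factorization over~$\F_2$, the bound $\deg P = t$, and the parity argument dropping one coefficient — are routine finite-field facts. I would present the argument in that order: first the cyclotomic factorization recalling \cite{lidl1997finite}, then the choice of primes via primitive prime divisors, then the support-size reduction.
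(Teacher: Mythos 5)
Your algebraic core is exactly the paper's: factor the cyclotomic polynomial $\Phi_p(x)$ over $\F_2$ into $(p-1)/t$ distinct irreducible factors of degree $t=\ord_p(2)$ whose roots are $p$-th roots of unity in $\F_{2^t}^*$, then argue that some factor must miss a coefficient. The divergence, and the genuine gap, is in how you certify infinitely many suitable primes. Bang/Zsygmondy gives you a primitive prime divisor $p_j$ of $2^{t_j}-1$, but it gives no lower bound remotely like $p_j\gtrsim 2^{t_j}$; the claim ``$p_j\ge 2^{t_j}/(\text{small})$, in particular $p_j>t_j+1$ for $j$ large'' is unfounded (the state of the art for the greatest prime factor of $2^n-1$ is precisely Stewart's bound $n\exp(\log n/(104\log\log n))$, Lemma~\ref{lemma:order_of_2_modp}, which is what the paper uses). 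And the danger you yourself flagged is real: for $t=4,10,12,18$ the \emph{only} primitive prime divisor of $2^t-1$ is $t+1$ (namely $5,11,13,19$), in which case $(p-1)/t=1$ and $\Phi_p$ is irreducible, destroying sparsity. The fix is small but you did not make it: take the exponents $t_j$ odd (say prime); then any prime divisor $p$ of $2^{q}-1$ with $q$ an odd prime has $\ord_p(2)=q$, and since $p$ is odd and $q\mid p-1$, in fact $2q\mid p-1$, so $p\ge 2q+1>q+1$ --- no Zsygmondy needed. Alternatively, your first, passing suggestion already works and is simpler than both routes: primes $p\equiv\pm1\pmod 8$ have $2$ a quadratic residue, so $t\mid(p-1)/2$, and Dirichlet gives infinitely many of them.

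There is also a slip in the sparsity step. ``Not all $t+1$ coefficients can be $1$ unless $P=\Phi_p$'' is a non sequitur: a degree-$t$ factor with full support would be $1+x+\cdots+x^t$, which is not $\Phi_p$ when $t<p-1$, so your argument does not exclude it. Two one-line patches: (i) the paper's pigeonhole --- over $\F_2$ there is only one degree-$t$ polynomial with $t+1$ nonzero coefficients, and there are $(p-1)/t\ge 2$ distinct factors, so some factor has support size at most $t$; or (ii) directly, $1+x+\cdots+x^t=(x^{t+1}-1)/(x-1)$ has roots of order dividing $t+1<p$, so it cannot divide $\Phi_p$ at all. With either patch, and with the corrected choice of primes above, your outline becomes a complete proof, essentially the paper's argument with a different (and in the quadratic-residue variant, more elementary) source of primes.
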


To prove Lemma~\ref{lemma:inf_many_sparse_pols}, we use some basic theory of cyclotomic polynomials (see for example~\cite[Chapter 2]{lidl1997finite}).
Let $r$ be a prime and $n\in\N$ not divisible by $r$. 
Recall that a primitive $n$-th root of unity over~$\F_r$ is a generator of the non-zero elements of the splitting field of the polynomial $x^n-1$ over $\F_r$.
Then, for any such root of unity~$\zeta$, the $n$-th cyclotomic polynomial is given by
\begin{align*}
\Phi_n(x)=\prod_{\gcd(s,n)=1}(x-\zeta^s),
\end{align*}  
where the product is over $s\in\{1,\dots,n\}$ such that $\gcd(s,n)=1$. 
The following lemma gives the properties of cyclotomic polynomials we need.

\begin{lemma}\label{lem:cyclo}
Let~$r$ be a prime,~$n\in \N$ not divisible by~$r$.
Then, the coefficients of $\Phi_n(x)$ lie in $\F_r$.
Moreover, if~$n$ is a prime, then~$\Phi_n(x)$ factors into $(n-1)/\ord_n(r)$ distinct monic irreducible polynomials all of which have degree exactly~$\ord_n(r)$.
\end{lemma}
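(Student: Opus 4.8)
The plan is to recall the standard Galois-theoretic argument for cyclotomic polynomials over finite fields, adapted to the ``splitting-field'' definition used here. First I would fix a primitive $n$-th root of unity $\zeta$ in the splitting field $\F_{r^d}$ of $x^n-1$ over $\F_r$, where $d=\ord_n(r)$ (this is the smallest $d$ with $n\mid r^d-1$, which is exactly the condition that $\F_{r^d}^*$ contains an element of order $n$). To see that $\Phi_n(x)$ has coefficients in $\F_r$, I would invoke the Frobenius automorphism $\sigma:y\mapsto y^r$ of $\F_{r^d}$ over $\F_r$. Raising to the $r$-th power permutes the primitive $n$-th roots of unity (since $\gcd(r,n)=1$, $\zeta^r$ again has order $n$), hence permutes the roots $\{\zeta^s:\gcd(s,n)=1\}$ of $\Phi_n$; therefore $\sigma$ fixes the coefficients of $\Phi_n$, so they lie in the fixed field $\F_r$. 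One should also note that $\Phi_n$ is independent of the choice of $\zeta$, and that $x^n-1=\prod_{e\mid n}\Phi_e(x)$, giving that $\Phi_n$ is a genuine polynomial over $\F_r$.

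Next, for the second claim assume $n$ is prime, so $\Phi_n(x)=1+x+\cdots+x^{n-1}$ and its roots are precisely the $n-1$ elements of order $n$ in $\overline{\F_r}^{\,*}$, all lying in $\F_{r^d}$ with $d=\ord_n(r)$ and in no smaller subfield. I would argue that $\Phi_n$ is squarefree (its roots are distinct since $x^n-1$ is separable, as $\gcd(x^n-1, nx^{n-1})=1$ when $r\nmid n$), and then show every monic irreducible factor has degree exactly $d$. For this, take any root $\gamma$ of $\Phi_n$; then $\F_r(\gamma)=\F_{r^j}$ where $j$ is the least integer with $\gamma^{r^j}=\gamma$, i.e.\ with $\gamma^{r^j-1}=1$, i.e.\ with $n\mid r^j-1$, which by definition of the order gives $j=\ord_n(r)=d$. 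Hence the minimal polynomial of $\gamma$ over $\F_r$ has degree $d$, so every irreducible factor of $\Phi_n$ has degree $d$; counting degrees, there are exactly $\deg\Phi_n/d=(n-1)/\ord_n(r)$ of them, and they are distinct because $\Phi_n$ is squarefree.

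I do not anticipate a genuine obstacle here — this is classical material — but the one point requiring a little care is the interface between the \emph{definition} of $\Phi_n$ given in the excerpt (as an explicit product over a chosen primitive root in the splitting field) and the usual characterization, so that the Frobenius-orbit argument applies cleanly and the degree count is unambiguous; in particular one should check that the set of exponents $\{s:\gcd(s,n)=1,\ 1\le s\le n\}$ is stable under multiplication by $r$ modulo $n$, which is immediate from $\gcd(r,n)=1$. Everything else is a degree count and an application of the standard fact that $\F_r(\gamma)=\F_{r^{\,\ord(\gamma \text{ in the relevant sense})}}$.
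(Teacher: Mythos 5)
Your proof is correct and is exactly the classical argument: the paper itself gives no proof of this lemma, citing it as standard material from Lidl--Niederreiter (Chapter 2), and your Frobenius-orbit argument for rationality of the coefficients plus the degree count via $\F_r(\gamma)=\F_{r^{d}}$ with $d=\ord_n(r)$ is the standard proof found there. The only caveat you rightly flag yourself: the paper's phrase ``generator of the non-zero elements of the splitting field'' should be read as ``element of order $n$ in the splitting field,'' which is the interpretation your argument uses.
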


For an integer $k\geq 2$, denote by $p(k)$ the largest prime number that divides $k$. We will use the following result of Stewart~\cite{stewart2013divisors}. 

\begin{lemma}[Stewart]\label{lemma:order_of_2_modp}
	For all $n$ large enough, we have
	\begin{align*}
	p(2^n-1)>n\exp\left(\frac{\log n}{104\log\log n}\right).
	\end{align*}
\end{lemma}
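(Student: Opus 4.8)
The statement is the theorem of Stewart~\cite{stewart2013divisors} quoted verbatim, so a fully self-contained proof is out of reach of a short sketch; it rests on the theory of lower bounds for linear forms in logarithms. The plan is to reduce the greatest-prime-factor estimate to a statement about the \emph{primitive part} of $2^n-1$ and then to play the known \emph{size} of that part against an \emph{upper bound} for its prime factorization coming from $p$-adic logarithmic forms.

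First I would pass to the primitive part. Writing $\Phi_n$ for the $n$-th cyclotomic polynomial, $2^n-1=\prod_{d\mid n}\Phi_d(2)$, so $p(2^n-1)\ge p(\Phi_n(2))$ and it is enough to bound the latter. By Lemma~\ref{lem:cyclo} together with the standard description of primitive prime divisors, for $n>6$ every prime $q\mid\Phi_n(2)$ either satisfies $q\equiv 1\pmod n$ or equals the largest prime factor of $n$, and in the exceptional case it occurs to the first power only; one also has $\log\Phi_n(2)=\phi(n)\log 2+O(1)$. Combining this with the classical lower bound $\phi(n)\gg n/\log\log n$ gives
\[
\sum_{\substack{q\mid\Phi_n(2)\\ q\equiv 1\,(\mathrm{mod}\,n)}} v_q(2^n-1)\log q \;=\; \phi(n)\log 2+O(\log n)\;\gg\;\frac{n}{\log\log n}.
\]

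Next I would argue by contradiction: assume $Q:=p(\Phi_n(2))\le n\exp\!\big(\log n/(104\log\log n)\big)$. Two ingredients constrain the identity above. First, the primes $q\equiv 1\pmod n$ with $q\le Q$ number at most $Q/n$ (via the Brun--Titchmarsh inequality one can also gain a $\log(Q/n)$ factor). Second, for each such $q$ dividing $\Phi_n(2)$ --- necessarily a primitive prime divisor of $2^n-1$ --- one needs a sharp upper bound on the valuation $v_q(2^n-1)$, which is exactly what Yu's theorem on lower bounds for $p$-adic linear forms in logarithms provides. Substituting these two bounds into the displayed identity forces $Q$ to be large. \textbf{This final step is the main obstacle and the technical heart of Stewart's proof:} a crude combination of the size estimate, the count of primes in the progression $1\pmod n$, and an off-the-shelf valuation bound falls short of the stated conclusion, and one must keep careful track of the interplay between the number of admissible primes, their individual sizes, and the total ``degree'' $\sum_q v_q(2^n-1)$ that the estimate $\log\Phi_n(2)\approx\phi(n)\log 2$ imposes, calibrating every constant --- which is where the explicit $104$ is produced. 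In the write-up one simply invokes Stewart's theorem as stated; for the downstream use in Lemma~\ref{lemma:inf_many_sparse_pols} the only feature needed is that $p(2^n-1)$ exceeds $n$ by a factor tending to infinity, which via $\ord_p(2)\mid n<p$ forces $2$ not to be a primitive root modulo $p$.
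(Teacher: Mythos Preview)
The paper does not prove this lemma at all: it is stated verbatim as a result of Stewart~\cite{stewart2013divisors} and simply invoked, exactly as you yourself recommend in your final paragraph. Your sketch of the mechanism behind Stewart's theorem (passing to the primitive part $\Phi_n(2)$, using that its prime divisors are $\equiv 1\pmod n$, and balancing the size $\log\Phi_n(2)\approx\phi(n)\log 2$ against Yu's $p$-adic bounds on the valuations) is a faithful outline of the actual argument, but it goes well beyond what the paper supplies. For the purposes of this paper your closing observation is the operative one: all that is used downstream is that $p(2^n-1)/n\to\infty$, so that for the prime $p=p(2^n-1)$ one has $\ord_p(2)\le n<(p-1)/2$.
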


\begin{proof}[ of Lemma \ref{lemma:inf_many_sparse_pols}]
By Lemma~\ref{lemma:order_of_2_modp}, for  $p=p(2^n-1)$ and~$n$ sufficiently large, we have $\ord_p(2)\leq n < (p-1)/2$.
Hence, there are infinitely many primes $p$ such that $t:=\ord_p(2)\leq(p-1)/2$. 
For such a~$p$,  consider the $p$-th cyclotomic polynomial $\Phi_p(x)$ over~$\F_2$. 
By Lemma~\ref{lem:cyclo}, $\Phi_p(x)$ factors into $(p-1)/t$ distinct monic irreducible polynomials over $\F_2$ of degree exactly~$t$. 
Since over $\F_2$, there is only one polynomial of degree $t$ with support size $t+1$, there must be an irreducible factor with support of size at most $t$. 
Let~$P(x)$ be such a factor. 
Then, since $P(x)|\Phi_p(x)$,  its roots lie in the set of $p$-th roots of unity in~$\F_{2^t}$. 
\end{proof}

\begin{remark}
For Mersenne primes $p=2^t-1$, there are polynomials over~$\F_2$ with support size~3 that meet the conditions of Proposition~\ref{prop:finite_field}.
Indeed, since in this case, any $p$-th root of unity~$\zeta$ in~$\F_{2^t}$ is a generator of~$\F_{2^t}^*$ and since~$1 + \zeta \ne 0$, there exists an $s$ such that $P(x) = 1 + x + x^s$ satisfies $P(1) =1$ and $P(\zeta) = 0$.
\end{remark}

\section{Proof of Theorem~\ref{thm:main}}
\label{sec:outline}

Let $p,t,P(x)$ be as in Lemma~\ref{lemma:inf_many_sparse_pols}, so that~$P$ has support size~$k \leq t$.
Let ${\bf i} = {\bf i}(P)$.
Since~$P$ is irreducible,~$P(1)\ne 0$ and so it satisfies the conditions of Proposition~\ref{prop:finite_field}.
Fix $\eps \in (0,\frac{1}{2})$ and $M\in (0,\infty)$.
Suppose that Proposition~\ref{prop:L1approx} held with $\|\tau\|_{L_\infty} \leq \eps$, which is to say that
\beqn
\Delta_{\bf i}
\subseteq
\conv_\C\big(M\cdot \{\text{polynomial phases of degree $\leq k-1$}\}\big) + \eps\D^{\F_p^n}.
\eeqn
Then, since there are at most $p^{O(n^{k-1})}$ polynomial phase functions of degree at most~$k-1$ (one for each $n$-variate polynomial of degree at most~$k-1$), this implies that
\beq\label{eq:lowent}
\log_2\ent(\Delta_{\bfi},\eps ,M)
\ll_p n^{k-1}
\ll_p n^{t-1}.
\eeq
At the same time, Proposition~\ref{prop:finite_field}, Lemma~\ref{lem:cube} and property~\eqref{eq:project} give
\beqn
\log_2\ent(\Delta_{\bf i},\eps,M) \gg_{p,\varepsilon,M} n^t.
\eeqn
This contradicts~\eqref{eq:lowent} for large~$n$ and finishes the proof of Theorem~\ref{thm:main}.

\appendix
\section{On the possible arithmetic patterns}

Here we show that our construction cannot give examples for dual functions corresponding to arithmetic progressions.
Let $p,r$ be primes and~$t = \ord_p(r)$.
Suppose that for some $k,s\in \N$, there is a polynomial $P(x)\in \F_r[x]$ of the form
\beqn
P(x) = \sum_{\iota=0}^{k-1}c_\iota x^{\iota s}
\eeqn
such that $P(1) \ne 0$ and $P(x)$ has a root in~$\F_{r^t}^*$ of order~$p$.
Then, the functions defined as in~\eqref{eq:dualF} and~\eqref{eq:dualphi} belong to the set of dual functions corresponding to the progression $\bfi = (0,s,2s,\dots,(k-1)s)$ and generate in a hypercube of dimension at least~$n^t$.
We show that $k \geq t+1$, which means that this does not contradict an $L_\infty$-version of Proposition~\ref{prop:L1approx}.

First note that~$s$ cannot be a multiple of~$p$, since for any $\gamma\in \F_{r^t}^*$ of order~$p$ we would have $\gamma^{s} = 1$, which implies that $P(\gamma) = P(1) \ne 0$.
It follows that for any such~$\gamma$, the element~$\gamma^s$ also has order~$p$ and does not equal~1.
Define the polynomial 
\beqn
Q(x) = \sum_{\iota=0}^{k-1}c_\iota x^\iota \in \F_r[x].
\eeqn
Then, this polynomial has a root~$\alpha$ in~$\F^*_{r^t}$ of order~$p$ (where $\alpha = \gamma^s$), satisfies~$Q(1) = P(1) \ne 0$ and has degree~$k-1$.
We claim that $k-1 \geq \ord_p(r)$.
If~$Q$ is reducible, then it has a factor of degree strictly less than~$k-1$ that has the same properties.
So assume that~$Q$ is irreducible.
Let $K = \F_r(\alpha)$ be the simple algebraic extension of~$\F_r$ obtained by adjoining~$\alpha$.
Then~$K$ is isomorphic to~$\F_{r^{k-1}}$.
Since~$\alpha$ lies in $\F_{r^{k-1}}$ and has order~$p$,  it follows that $p\mid r^{k-1} - 1$.
But this implies that $k-1 \geq \ord_p(r) = t$.

\bibliographystyle{alphaabbrv}
\bibliography{dual_entropy}
\end{document}